\newtheorem{thm}{Theorem}[section]
\newtheorem{lem}[thm]{Lemma}
\newtheorem{prop}[thm]{Proposition}
\theoremstyle{definition}
\theoremstyle{remark}
\numberwithin{equation}{section}
\def \N {\mathbb N}
\def \Z {\mathbb Z}
\def \R {\mathbb R}
\def \T {\mathbb{T}}
\def \B {\mathcal{B}}
\def \X {\mathcal{X}}
\def \a {\alpha }
\def \b {\beta}
\def \ep {\epsilon}
\def \d {\delta}
\def\w {\omega}
\begin{document}
\title{A counterexample on multiple convergence without commutativity}% on systems with zero entropy}

\author{Wen Huang}
\author{Song Shao}
\author{Xiangdong Ye}

\address{School of Mathematical Sciences, University of Science and Technology of China, Hefei, Anhui, 230026, P.R. China}

\email{wenh@mail.ustc.edu.cn}
\email{songshao@ustc.edu.cn}
\email{yexd@ustc.edu.cn}

\subjclass[2000]{Primary: 37A05; 37A30}
%\keywords{regionally proximal relation; minimal flow}

\thanks{This research is supported by National Natural Science Foundation of China (12371196, 12031019, 12090012, 11971455).}

%\date{Feb. 11, 2023}

\begin{abstract}
It is shown that there exist a probability space $(X,\X,\mu)$, two  ergodic measure preserving transformations $T,S$ acting on $(X,\X,\mu)$ with $h_\mu(X,T)=h_\mu(X,S)=0$, and $f, g \in L^\infty(X,\mu)$ such that the limit
\begin{equation*}
  \lim_{N\to\infty}\frac{1}{N}\sum_{n=0}^{N-1} f(T^{n}x)g(S^{n}x)
\end{equation*}
does not exist in $L^2(X,\mu)$. %, which in some sense answers a special case of a problem by Frantzikinakis and Host. % in \cite{FranHost21}.

\end{abstract}

\maketitle

%\markboth{ergodic}{S. Shao and X.D. Ye}

%\newpage

%\tableofcontents \settocdepth{section}

%\newpage

\section{Introduction}

In the article, the set of integers (resp. natural numbers $\{1,2,\ldots\}$) is denoted by $\Z$ (resp. $\N$). By a {\em measure-preserving system} (m.p.s. for short), we mean a quadruple $(X,\X, \mu, T)$, where $(X,\X,\mu )$ is a Lebesgue probability space and $T: X \rightarrow X$ is an invertible measure preserving transformation.

Let $T,S$ be measure preserving transformations acting on a Lebesgue probability space $(X,\X, \mu)$. Then
for $f,g\in L^\infty(X,\mu)$, the existence in $L^2(X,\mu)$ of the limit
\begin{equation}\label{aa}
  \lim_{N\to\infty}\frac{1}{N}\sum_{n=0}^{N-1} f(T^nx)g(S^nx)
\end{equation}
was established in the commutative case by Conze and Lesigne \cite{CL84}, and the extension to
any finite number of transformations spanning a nilpotent group and to polynomial iterates
was established by Walsh \cite{Walsh}. When $T,S$ do not generate a nilpotent group, then \eqref{aa} may not exist \cite{BL02}.
Moreover, if both $T$ and $S$ have positive entropy, \eqref{aa} may fail to exist \cite[Proposition 1.4]{FranHost21}.
Recently,  Frantzikinakis and Host  \cite{FranHost21} gave the following beautiful multiple convergence theorem without commutativity.

\medskip

\noindent {\bf Theorem F-H.}
{\em
Let $T,S$ be measure preserving transformations acting on a probability
space $(X,\X,\mu)$ such that the system $(X,\X,\mu,T)$ has zero entropy. Let also $p\in \Z[t]$ be a
polynomial with $\deg (p) \ge  2$. Then for every $f,g\in L^\infty(X,\mu)$, the limit
\begin{equation}\label{bb}
  \lim_{N\to\infty}\frac{1}{N}\sum_{n=0}^{N-1} f(T^nx)g(S^{p(n)}x)
\end{equation}
exists in $L^2(X,\mu)$.
}

\medskip

It is unknown that whether {Theorem F-H} holds  when one replaces the iterates $n,p(n)$ by the pair $n,n$ or $n^2,n^3$ or
arbitrary polynomials $p,q\in \Z[t]$ with $p(0) = q(0) = 0$ (except the case covered by the above theorem), see \cite[Problem]{FranHost21} and the sentence below it. In \cite{HSY2023}, we showed that for polynomials $p_1, p_2\in \Z[t]$ with $\deg p_1, \deg p_2\ge 5$ there exist a probability space $(X,\X,\mu)$, two  ergodic measure preserving transformations $T,S$ acting on $(X,\X,\mu)$ with $h_\mu(X,T)=h_\mu(X,S)=0$, and $f, g \in L^\infty(X,\mu)$ such that the limit
\begin{equation*}
  \lim_{N\to\infty}\frac{1}{N}\sum_{n=0}^{N-1} f(T^{p_1(n)}x)g(S^{p_2(n)}x)
\end{equation*}
does not exist in $L^2(X,\mu)$. In this paper, we show the case $p_1(n)=p_2(n)=n$.

%In this paper, in some sense we obtain an answer to this problem. %, i.e. we show that if the degrees of polynomials involved are not less than 3,
%then Theorem  does not hold.

\medskip

To be precise, the main result of the paper is as follows.

\medskip

\noindent {\bf Main Theorem.}
{\em there exist a probability space $(X,\X,\mu)$, two  ergodic measure preserving transformations $T,S$ acting on $(X,\X,\mu)$ with $h_\mu(X,T)=h_\mu(X,S)=0$, and $f, g \in L^\infty(X,\mu)$ such that the limit
\begin{equation*}
  \lim_{N\to\infty}\frac{1}{N}\sum_{n=0}^{N-1} f(T^{n}x)g(S^{n}x)
\end{equation*}
does not exist in $L^2(X,\mu)$.
}

\medskip

\subsection*{Remark}
Before submitting our result to arxiv, we got to know that Austin also obtained the counterexample in \cite{Tim}.
Exchanging opinions, we think that it is better to make it separately, as the ideas in the two papers are essentially different,

%\medskip
%\noindent{\bf Acknowledgments:}

\section{Proof of the main result}

\subsection{Discrepancy skew product}

For a topological space $X$, we use $\B(X)$ denote the $\sigma$-algebra generated by all open subsets of $X$.

Let $\T=\R/\Z\cong [0,1)$ denote the additive circle. Consider the function $\varphi: \T\rightarrow \{-1,1\}$ defined by
$$\varphi(\theta)=2\cdot 1_{[0,\frac 12)}(\theta)-1=\left\{
                                                      \begin{array}{ll}
                                                        1, & \hbox{$\theta\in [0,\frac 12)$;} \\
                                                        -1, & \hbox{$\theta\in [\frac 12,1)$.}
                                                      \end{array}
                                                    \right.
$$
Let the skew products $T_\a: \T\times \Z\rightarrow \T\times \Z$ be defined for $\a\not\in \mathbb Q$ by
$$T_\a(\theta,v)=(\theta+\a, v+\varphi(\theta)).$$
They are measure preserving transformations of the $\sigma$-finite measure space $(\T\times \Z, \B(\T\times \Z), m_\T\times \#)$, where $m_\T$ is
the Lebesgue measure on $\T$ and $\#$ is counting measure on $\Z$.

We have that for $n\in \N$, $T^n_\a(\theta,v)=(\theta+n\a, v+\varphi_n(\theta))$ where
$$\varphi_n(\theta)=\sum_{k=0}^{n-1}\varphi(\theta+k\a).$$
We call the function $(n,\theta)\mapsto \varphi_n(\theta)$ the {\em discrepancy
cocycle} and $T_\a$ the {\em discrepancy skew product} \cite{Aar2017} (it is called {\em deterministic random walk} in \cite{Aar1982}).

Ergodicity of the discrepancy skew product $T_\a$ was established in \cite{CK76}. For each $n\in \N$, define $\Psi_n: \T\rightarrow \N$ by
\begin{equation}\label{}
  \Psi_n(\theta)=\sum_{k=0}^{n-1} 1_{\T\times \{0\}}\Big(T^k_\a(\theta, 0)\Big)=\#\{0\le k\le n-1: \varphi_k(\theta)=0\}.
\end{equation}
The ergodicity of $T_\a$ tells us that
\begin{equation}\label{b1}
  \Psi_n(\theta)\rightarrow \infty, \ a.e. \quad \text{and}\quad \frac{\Psi_n(\theta)}{n}\rightarrow 0, \ a.e., \ n\to\infty.
\end{equation}

\medskip

We call $\a\not\in \mathbb Q$ {\em badly approximable} if $\inf\{q^2|\a-\frac{p}{q}|: q\in \N, p\in \Z\}>0$. For $a,b>0,M>1$, $a=M^{\pm 1}b$ means $\frac{1}{M}\le \frac{a}{b}\le M$.

\begin{thm}\cite{Aar2017}\label{thm-aar}
If $\a$ is badly approximable, then there is some $M>1$ such that
\begin{equation}\label{}
  \|\Psi_n\|_{L^\infty(\T,m_\T)}\le M\int_\T \Psi_n(\theta) dm_\T(\theta), \quad \text{and} \quad \int_\T \Psi_n(\theta) dm_\T(\theta)=M^{\pm 1}\frac{n}{\sqrt{\log{n}}}.
\end{equation}
\end{thm}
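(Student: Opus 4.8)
The plan is to deduce both estimates from a local limit theorem for the discrepancy cocycle, in a form uniform in the base point. One begins with the elementary identity
$$
  \varphi_n(\theta)=2\,\#\{0\le k<n:\{\theta+k\a\}\in[0,\tfrac12)\}-n ,
$$
so that $\{\varphi_n(\theta)=0\}$ is precisely the event that the orbit segment $\theta,\theta+\a,\dots,\theta+(n-1)\a$ is perfectly balanced between the two halves of $\T$ (in particular $n$ is then even). The key piece of rigidity is the Denjoy--Koksma inequality: $\varphi$ has bounded variation with $\int_\T\varphi\,dm_\T=0$, so $\|\varphi_{q_m}\|_{L^\infty(\T,m_\T)}\le\mathrm{Var}(\varphi)=4$ along the denominators $q_m$ of the convergents $p_m/q_m$ of $\a$; and ``$\a$ badly approximable'' is exactly the statement that the partial quotients of $\a$ are bounded, whence $q_{m+1}/q_m$ is bounded and the $q_m$ grow geometrically. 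Expanding a general $n$ in the Ostrowski system $n=\sum_m b_m q_m$ with digits $b_m$ bounded by the partial quotients, and splitting $\{0,\dots,n-1\}$ accordingly, $\varphi_n(\theta)$ is a sum of about $\log n$ ``blocks'', the $m$-th being a sum of $b_m$ translates of $\varphi_{q_m}$ and hence of absolute value $\le 4b_m=O(1)$; in particular $|\varphi_n(\theta)|=O(\log n)$ for every $\theta$.

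The heart of the matter is to upgrade this coarse bound to the sharp order. Since the rotation by $\a$ is, at each scale, strongly mixing with respect to the next scale, the blocks above behave like a sum of weakly dependent, uniformly bounded, nondegenerate increments, and a renormalization (martingale) central limit theorem yields a constant $\sigma>0$ with $\mathrm{Var}_{m_\T}(\varphi_n)=M^{\pm1}\sigma^2\log n$ together with its local counterpart (legitimate because the $\varphi_n$ are $\Z$-valued and nondegenerate): there is $M>1$, depending only on $\a$, with
$$
  m_\T\big(\{\theta\in\T:\varphi_n(\theta)=c\}\big)=M^{\pm1}\frac{1}{\sqrt{\log n}}
$$
for all large $n$, uniformly over $c\in\Z$ with $|c|\le4$ and $c\equiv n\pmod 2$. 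For the $L^\infty$ estimate one needs, in addition, the companion \emph{pointwise} bound, uniform in the base point,
$$
  \#\{0\le\ell<q_m:\varphi_\ell(\psi)=c\}\le M\,\frac{q_m}{\sqrt{\log q_m}}\qquad\text{for all }\psi\in\T\text{ and all }|c|\le4 ,
$$
which is the more delicate output of the same renormalization.

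Granting these, the value of the integral is immediate by Tonelli and linearity: $\int_\T\Psi_n\,dm_\T=\sum_{k=0}^{n-1}m_\T(\{\varphi_k=0\})=M^{\pm1}\sum_{2\le k<n}1/\sqrt{\log k}$ (the bounded range of small $k$ and the parity restriction are absorbed into $M$), and $\sum_{k\le n}1/\sqrt{\log k}=M^{\pm1}\,n/\sqrt{\log n}$ by comparison with $\int_2^n dt/\sqrt{\log t}$. For the $L^\infty$ bound, fix $\theta$, pick the scale $r$ with $q_r\le\sqrt n<q_{r+1}$ (so $\log q_r$ has the order of $\log n$ and $n/q_r\to\infty$), and tile $\{0,\dots,n-1\}$ into about $n/q_r$ consecutive blocks of length $q_r$. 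On the $j$-th block, $\varphi_{jq_r+\ell}(\theta)=\varphi_{jq_r}(\theta)+\varphi_\ell(\theta+jq_r\a)$ with $|\varphi_{jq_r}(\theta)|\le4$ by Denjoy--Koksma, so the number of $k$ there with $\varphi_k(\theta)=0$ equals $\#\{0\le\ell<q_r:\varphi_\ell(\theta+jq_r\a)=-\varphi_{jq_r}(\theta)\}\le M\,q_r/\sqrt{\log q_r}$ by the pointwise occupation bound. Summing over the $M^{\pm1}\,n/q_r$ blocks the factor $q_r$ cancels, giving $\|\Psi_n\|_{L^\infty(\T,m_\T)}\le M\,n/\sqrt{\log q_r}=M\,n/\sqrt{\log n}$, which together with the integral asymptotic yields $\|\Psi_n\|_{L^\infty(\T,m_\T)}\le M\int_\T\Psi_n\,dm_\T$.

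I expect the main obstacle to be the uniform-in-base-point occupation bound. A soft, scale-by-scale recursion for $\sup_{\psi,\,|c|\le C}\#\{0\le\ell<q_m:\varphi_\ell(\psi)=c\}$ cannot work: passing from scale $m$ to scale $m-1$ multiplies by the factor $q_m/q_{m-1}>1$ and enlarges the target window by $4$, so after the roughly $\log q_m$ scales it produces a bound that is an honest power of $q_m$, i.e.\ weaker than the trivial bound $q_m$. One must instead exploit the Denjoy--Koksma rigidity itself --- the fact that, unlike a genuine random walk, the cocycle's partial sums over the denominator scales are deterministically bounded --- to push the renormalization through with control that holds uniformly over all base points at once; this is the technical core of the cited result.
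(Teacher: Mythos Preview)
The paper does not prove this theorem; it is quoted from \cite{Aar2017} (the quadratic case being already in \cite{Aar1982}) and used as a black box, so there is no in-paper argument to compare against.

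Your outline has the right architecture: the integral estimate does follow from a local limit theorem $m_\T(\{\varphi_k=0\})\asymp 1/\sqrt{\log k}$ summed over $k<n$, and the $L^\infty$ bound does rest on a pointwise occupation estimate uniform in the base point, which is indeed the technical heart of \cite{Aar2017}. But your block argument for the $L^\infty$ bound contains a genuine slip. You assert $|\varphi_{jq_r}(\theta)|\le 4$ ``by Denjoy--Koksma''; however, Denjoy--Koksma bounds $\varphi_{q_r}$, not $\varphi_{jq_r}$, and the cocycle relation only yields $|\varphi_{jq_r}(\theta)|\le 4j$, with $j$ running up to about $n/q_r\asymp\sqrt n$. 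Consequently the target $c=-\varphi_{jq_r}(\theta)$ in your block count is not confined to $|c|\le 4$, and the pointwise occupation bound as you stated it (for $|c|\le 4$ only) does not apply. This is repairable --- for any $c$, the first-passage decomposition gives $\#\{0\le\ell<q_r:\varphi_\ell(\psi)=c\}\le\sup_{\psi'}\#\{0\le\ell<q_r:\varphi_\ell(\psi')=0\}$, so uniformity in $c$ comes for free once one has uniformity in the base point at level $0$ --- but as written the step fails. Since you yourself defer the uniform occupation bound to the reference, your proposal is best read as a roadmap of the strategy in \cite{Aar2017} rather than an independent proof.
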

In the case $\a\not\in \mathbb Q$ is quadratic, the result was established in \cite{Aar1982}.

\subsection{The ratio ergodic theorem}

%\begin{thm}[Chacon-Ornstein Theorem]\cite{CO60}
%Let $T$ be a positive linear operator on $L^1$ of a positive measure space $(X,\X,\mu)$, and let $T$ have $L^1$ norm less than or equal to one. Then if $f$ and $g$ are functions in $L^1$ and if $g$ is nonnegative, the limit
%$$\lim_{n\to\infty} \frac{\sum_{k=0}^{n-1}T^kf}{\sum_{k=0}^{n-1}T^kg}$$ exists and is finite almost everywhere on the set $\{x\in X: T^kg(x)>0$ for some $k\ge 0\}$.
%\end{thm}

The ratio ergodic theorems, which are generalizations of Birkhoff's ergodic theorem, were given by Hopf and Stepanov for infinite measure preserving transformations \cite{Hopf,Stepanov}, Hurewicz for non-singular transformations\cite{Hurewicz}, and Chacon and Ornstein for operators \cite{CO60}. Hochman studied the ratio ergodic theorem for group actions in \cite{Hochman10, Hochman13}.

\begin{thm}[Hopf-Stepanov ergodic Theorem]\cite{Hopf,Stepanov}
Let $T$ be a conservative, measure preserving transformation of a $\sigma$-finite measure space $(X,\X,\mu)$. If $f$ and $g$ are functions in $L^1$ and $g$ is nonnegative, then the limit
$$\lim_{n\to\infty} \frac{\sum_{k=0}^{n-1}T^kf}{\sum_{k=0}^{n-1}T^kg}$$
exists and is finite almost everywhere on the set $\{x\in X: \sum_{k=0}^\infty T^kg(x)>0\}$.

When $T$ is ergodic and $\int_Xgd\mu\neq 0$,  $\displaystyle \lim_{n\to\infty} \frac{\sum_{k=0}^{n-1}T^kf}{\sum_{k=0}^{n-1}T^kg}=\frac{\int_Xfd\mu}{\int_Xgd\mu}$.
\end{thm}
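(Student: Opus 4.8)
The plan is to derive the statement from the \emph{Hopf maximal ergodic inequality}, treating $T$ as the positive linear operator $f\mapsto f\circ T$ on $L^1(X,\X,\mu)$, which is an isometry (hence a contraction) because $T$ preserves $\mu$. Write $S_nh=\sum_{k=0}^{n-1}T^kh$ and $M_Nh=\max_{0\le n\le N}S_nh$ (with $S_0h=0$, so $M_Nh\ge0$). The first step is Garsia's form of the maximal inequality: for every $h\in L^1$, $\int_{\{M_Nh>0\}}h\,d\mu\ge0$. I would prove it by the standard telescoping argument: positivity of $T$ gives $S_nh\le h+T(M_Nh)$ for $1\le n\le N$, so on $A_N=\{M_Nh>0\}$ one has $M_Nh\le h+T(M_Nh)$; integrating, and using that $M_Nh$ vanishes off $A_N$ together with $\int_X T(M_Nh)\,d\mu\le\int_X M_Nh\,d\mu$, yields $\int_{A_N}h\,d\mu\ge0$. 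Letting $N\to\infty$ (monotone convergence) gives $\int_{\{\sup_nS_nh>0\}}h\,d\mu\ge0$.

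Next I would convert this into almost everywhere convergence of the ratio $S_nf/S_ng$. Conservativity yields the dichotomy $\sum_{k\ge0}T^kg\in\{0,\infty\}$ a.e., so on $C=\{\sum_kT^kg>0\}$ we have $S_ng\to\infty$. Since $S_nf\circ T=S_{n+1}f-f$ and likewise for $g$, both $\ov R=\limsup_nS_nf/S_ng$ and $\underline R=\liminf_nS_nf/S_ng$ are $T$-invariant; hence for rationals $a<b$ the set $E=\{\underline R<a<b<\ov R\}\cap C$ is $T$-invariant. Applying the maximal inequality to the invariant truncations $(f-bg)\mathbf 1_E$ and $(ag-f)\mathbf 1_E$, whose ergodic sums are $\mathbf 1_E(S_nf-bS_ng)$ and $\mathbf 1_E(aS_ng-S_nf)$ and which are positive for some $n$ throughout $E$ since $S_ng\to\infty$, sandwiches $b\int_Eg\,d\mu\le\int_Ef\,d\mu\le a\int_Eg\,d\mu$. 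As $a<b$ this forces $\int_Eg\,d\mu=0$; but then $g\mathbf 1_E=0$ a.e., so by invariance $T^kg=0$ a.e. on $E$ for all $k$, giving $\sum_kT^kg=0$ on $E$, which contradicts $E\subseteq C$ unless $\mu(E)=0$. Taking the union over rational $a<b$ gives $\ov R=\underline R$ a.e. on $C$, and the same scheme with $b\to\infty$ (resp. $a\to-\infty$) shows the common value is finite.

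For the ergodic identification, when $\int_Xg\,d\mu\ne0$ the invariant function $\sum_kT^kg$ has positive integral over any finite window ($\int_X S_ng\,d\mu=n\int_X g\,d\mu$), so by the dichotomy it is a.e.\ $\infty$; thus $C=X$ mod $\mu$ and the a.e.\ limit $R$ equals an a.e.\ constant $c$. To pin down $c$, I reuse the maximal inequality with $E=X$: for any $b<c$ the sums $S_nf-bS_ng=S_ng\,(S_nf/S_ng-b)\to+\infty$, so $\{\sup_nS_n(f-bg)>0\}=X$ and hence $\int_Xf\,d\mu\ge b\int_Xg\,d\mu$; letting $b\uparrow c$ gives $\int_Xf\,d\mu\ge c\int_Xg\,d\mu$. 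Symmetrically, for $a>c$ one gets $\int_Xf\,d\mu\le a\int_Xg\,d\mu$, whence $\int_Xf\,d\mu=c\int_Xg\,d\mu$, i.e. $c=\int_Xf\,d\mu/\int_Xg\,d\mu$.

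The main obstacle I anticipate is the middle step, passing from the maximal inequality to pointwise convergence of the \emph{ratio}. The delicate points are that $\ov R$ and $\underline R$ must be genuinely $T$-invariant, so that the truncations $(\cdot)\mathbf 1_E$ have clean ergodic sums $\mathbf 1_E S_n(\cdot)$, and that the vanishing $\int_Eg\,d\mu=0$ must be upgraded to $\mu(E)=0$. This upgrade is exactly where conservativity, through the dichotomy $\sum_kT^kg\in\{0,\infty\}$, is indispensable; it is the feature distinguishing the $\sigma$-finite ratio theorem from the classical Birkhoff average and the place where the hypotheses on $T$ and on the nonnegativity of $g$ are genuinely used.
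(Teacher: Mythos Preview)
The paper does not prove this theorem; it is quoted as a classical result with citations to \cite{Hopf,Stepanov} and used as a black box to derive \eqref{b3}. So there is no ``paper's own proof'' to compare against.

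That said, your outline is the standard Garsia--Hopf route and is correct. A couple of minor remarks: in the passage from $\int_{A_N}h\,d\mu\ge0$ to $\int_{\{\sup_nS_nh>0\}}h\,d\mu\ge0$ you invoke monotone convergence, but since $h$ is signed you really want dominated convergence (with dominator $|h|\in L^1$) applied to $h\mathbf 1_{A_N}$; and in the invariance step for $\overline R,\underline R$ you should note explicitly that on $C$ one has $S_ng\to\infty$, so $(S_{n+1}f-f)/(S_{n+1}g-g)$ and $S_{n+1}f/S_{n+1}g$ share the same $\limsup$ and $\liminf$. The upgrade from $\int_Eg\,d\mu=0$ to $\mu(E)=0$ is handled exactly as you say, via the conservative dichotomy $\sum_kT^kg\in\{0,\infty\}$ a.e., and your identification of the constant in the ergodic case by squeezing with $b\uparrow c$ and $a\downarrow c$ is clean. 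Nothing is missing.
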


For any $n\in \N, v\in \Z$, define $\Psi_n^{(v)}: \T\rightarrow \N$ by
\begin{equation}\label{}
  \Psi_n^{(v)}(\theta)=\sum_{k=0}^{n-1} 1_{\T\times \{v\}}\Big(T^k_\a(\theta, 0)\Big)=\#\{0\le k\le n-1: \varphi_k(\theta)=v\}.
\end{equation}
Note that $\Psi_n^{(0)}=\Psi_n$.

In the sequel we fix a badly approximable $\a\not\in \mathbb Q$.
Since  $(\T\times \Z, \B(\T\times \Z), m_\T\times \#, T_\a)$ is ergodic, by Hopf-Stepanov ergodic Theorem for each $v\in \Z$ we have
\begin{equation}\label{b3}
  \frac{\Psi^{(v)}_n(\theta)}{\Psi_n(\theta)}=\frac{\sum_{k=0}^{n-1}1_{\T\times \{v\}}\Big(T^k_\a(\theta, 0)\Big)}{\sum_{k=0}^{n-1}1_{\T\times \{0\}}\Big(T^k_\a(\theta, 0)\Big)}\longrightarrow \frac{\int_{\T\times \Z}1_{\T\times \{v\}}d m_\T\times \#}{\int_{\T\times \Z}1_{\T\times \{0\}}d m_\T\times \#}=1, n\to\infty,
\end{equation}
for $m_\T$-a.e. $\theta\in \T$.
By Egoroff's Theorem, for every $\ep_v>0$, there exists $B_v\in \B(\T)$ such that $m_\T(\T\setminus B_v)<\ep_v$ and $\frac{\Psi^{(v)}_n(\theta)}{\Psi_n(\theta)}\to 1$ uniformly on $B_v$. In particular, there is some $N_v\in \N$ such that $|\Psi^{(v)}_n(\theta)/\Psi_n(\theta)-1|<1$ for all $n\ge N_v$ and $\theta\in B_v$. Thus $\Psi^{(v)}_n(\theta)<2\Psi_n(\theta)$ for all $n\ge N_v$ and $\theta\in B_v$.
By Theorem \ref{thm-aar}, there is some $M_0>1$ such that $ \Psi_n(\theta)\le M_0\frac{n}{\sqrt{\log{n}}}$ for all $n\in \N$ and $m_\T$-a.e. $\theta\in \T$. It follows that $\Psi^{(v)}_n(\theta)<2\Psi_n(\theta)\le 2M_0\frac{n}{\sqrt{\log{n}}}$ for all $n\ge N_v$ and $m_\T$-a.e. $\theta\in B_v$.
Without loss of generality, we assume that  $\Psi^{(v)}_n(\theta) < 2M_0\frac{n}{\sqrt{\log{n}}}$ for all $n\ge N_v$ and for all $\theta\in B_v$.
Choose an $M_v>1$ such that $\Psi^{(v)}_n(\theta) \le M_v\frac{n}{\sqrt{\log{n}}}$ for all $n\in \N$ and for all $\theta\in B_v$.

To sum up, we have:
\begin{lem}\label{lem-key}
Let $\a\not\in \mathbb Q$ be badly approximable and $v\in \Z$. For any $\ep_v>0$, there is some $B_v\in \B(\T)$ with $m_\T(\T\setminus B_v)<\ep_v$ and $M_v>1$ such that
\begin{equation}\label{}
  \Psi^{(v)}_n(\theta)\le M_v\frac{n}{\sqrt{\log{n}}}, \ \text{for all}\ n\in \N, \theta\in B_v.
\end{equation}
\end{lem}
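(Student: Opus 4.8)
The plan is to combine the two quantitative inputs already assembled in the excerpt: the Hopf–Stepanov ratio ergodic theorem, which controls $\Psi_n^{(v)}$ relative to $\Psi_n=\Psi_n^{(0)}$, and Aaronson's Theorem \ref{thm-aar}, which pins down the growth of $\Psi_n$ itself at the scale $n/\sqrt{\log n}$. The entire purpose of passing to the subset $B_v$ is to convert the \emph{pointwise} a.e. convergence coming from the ratio theorem into a statement that is \emph{uniform in $n$} on a set of almost full measure; once that uniformity is secured, Aaronson's bound supplies the decisive scale $n/\sqrt{\log n}$ for free.

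First I would record that, since the discrepancy skew product $(\T\times\Z,\B(\T\times\Z),m_\T\times\#,T_\a)$ is ergodic, the ratio ergodic theorem gives $\Psi_n^{(v)}(\theta)/\Psi_n(\theta)\to 1$ for $m_\T$-a.e. $\theta$, exactly as displayed in \eqref{b3} (the ratio is legitimate because $\varphi_0\equiv 0$ forces $\Psi_n(\theta)\ge 1$). I would then apply Egoroff's theorem: for the prescribed $\ep_v>0$ there is $B_v\in\B(\T)$ with $m_\T(\T\setminus B_v)<\ep_v$ on which this convergence is uniform. Uniformity yields an index $N_v$ such that $|\Psi_n^{(v)}(\theta)/\Psi_n(\theta)-1|<1$, hence $\Psi_n^{(v)}(\theta)<2\Psi_n(\theta)$, for every $n\ge N_v$ and every $\theta\in B_v$.

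Next I would feed in Theorem \ref{thm-aar}. Because $\a$ is badly approximable, the $L^\infty$ bound $\|\Psi_n\|_\infty\le M\int_\T\Psi_n\,dm_\T$ together with $\int_\T\Psi_n\,dm_\T=M^{\pm 1}n/\sqrt{\log n}$ produces a constant $M_0>1$ with $\Psi_n(\theta)\le M_0\,n/\sqrt{\log n}$ for all $n$ and $m_\T$-a.e. $\theta$. Deleting the null set where this fails and reintersecting with $B_v$ (which leaves the measure estimate intact), I obtain $\Psi_n^{(v)}(\theta)<2M_0\,n/\sqrt{\log n}$ for all $n\ge N_v$ and all $\theta\in B_v$. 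Finally I would absorb the finitely many remaining indices $1\le n<N_v$ into the constant: for these one has the trivial estimate $\Psi_n^{(v)}(\theta)\le n$, while $n/\sqrt{\log n}$ is bounded below by a fixed positive number for $2\le n<N_v$ (the index $n=1$ is vacuous, the right-hand side being $+\infty$). Choosing $M_v\ge 2M_0$ large enough to cover $n=2,\dots,N_v-1$ then gives the asserted inequality for every $n\in\N$ and every $\theta\in B_v$.

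The only genuinely delicate point is that the ratio ergodic theorem supplies merely almost-everywhere convergence, with no uniformity in $n$ or in $\theta$; Egoroff is precisely what buys uniformity in $n$ on a set of measure at least $1-\ep_v$, and this is the strongest one can hope for, since the unbounded oscillation of the discrepancy cocycle rules out any bound valid on all of $\T$. The quantitative scale $n/\sqrt{\log n}$—which is the whole reason the eventual construction succeeds—is imported intact from Aaronson's theorem, so no independent estimation of the cocycle $\varphi_n$ is needed in this lemma.
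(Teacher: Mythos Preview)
Your proposal is correct and follows essentially the same approach as the paper: ratio ergodic theorem (Hopf--Stepanov) for the a.e.\ convergence $\Psi_n^{(v)}/\Psi_n\to 1$, Egoroff to upgrade to uniformity on $B_v$, Aaronson's Theorem~\ref{thm-aar} for the scale $n/\sqrt{\log n}$, and then absorption of the finitely many small indices into $M_v$. You actually spell out the treatment of $1\le n<N_v$ (and the harmless $n=1$ case) more carefully than the paper does.
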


\subsection{Construction of the m.p.s. $(X,\X,\mu,T)$}

\subsubsection{}

Let $\T=\R/\Z$ be the circle.
Let $(\T, \B(\T),m_\T, R_\a)$ be the circle rotation by an irrational number $\a$, where $m_\T$ is the Lebesgue measure on $\T$ and
$$R_\a: \T\rightarrow \T,\  \theta\mapsto \theta +\a \pmod{1}.$$

\medskip

Let $\Sigma=\{-1,1\}^\Z$, and $\big(\Sigma, \B(\Sigma), \nu ,\sigma\big)$ be the $(\frac12,\frac 12)$-shift. That is, $\nu=(\frac 12\d_{-1}+\frac12\d_1)^\Z$ is the product measure on $\Sigma=\{-1,1\}^\Z$ and for each sequence $\w\in \Sigma$,
$$(\sigma \w)(n)=\w(n+1), \quad \forall n\in \Z.$$

Now let $(X,\X,\mu)=(\T\times \Sigma, \B(\T\times \Sigma),m_\T\times \nu)$, and define
\begin{equation}\label{}
  T: \T\times \Sigma\rightarrow \T\times \Sigma, \quad (\theta, \w)\mapsto (\theta+\a, \sigma^{\varphi(y)}\w).
\end{equation}
Since $\varphi: \T\rightarrow \Z$ is measurable, it is easy to verify that $(X,\X,\mu,T)$ is a m.p.s.
It is easy to verify that for all $n\ge 1$ and $(\theta,\w)\in X$, we have
\begin{equation}\label{b2}
  T^n(\theta,\w)=(R_\a^n(\theta), \sigma^{\varphi_n(y)}\w)=(\theta+n\a, \sigma^{\varphi_n(\theta)}\w).
\end{equation}

\subsubsection{}

For $a, b\in \mathbb{Z}$ with $a\le b$ and $(s_1,s_2,\ldots, s_{b-a+1})\in \{-1,1\}^{b-a+1}$, let
\begin{equation}\label{}
  _{a}[s_1,s_2,\ldots, s_{b-a+1}]_b:=\{\w\in \Sigma: \w(a)=s_1,\ldots,\w(b)=s_{b-a+1}\}.
\end{equation}
And for $i\in \{-1,1\}$, $j\in \Z$ let
$$[i]_j:= {_j[i]_j}=\{\w\in \Sigma: \w(j)=i\}.$$

\subsubsection{}
%Now we show $(X,\X,\mu,T)$ is ergodic and its entropy is zero. %The proof is similar to the proof of Proposition 2.2 in \cite{HSY2023}.
The fact that $(X,\X,\mu,T)$ is ergodic and has zero entropy should be well known. Since we can not find a reference, we give a proof for completeness.

\begin{prop}\label{prop-zero}
$(X,\X,\mu,T)$ is an ergodic m.p.s. with $h_\mu(X,T)=0$.
\end{prop}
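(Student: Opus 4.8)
The plan is to verify ergodicity and zero entropy separately, exploiting throughout the identification of $\Sigma=\{-1,1\}^\Z$ with the compact abelian group $(\Z/2\Z)^\Z$, for which $\nu$ is Haar measure and the characters are $\chi_G(\w)=\prod_{j\in G}\w(j)$, indexed by the finite subsets $G\subseteq\Z$, satisfying $\chi_G\circ\sigma^m=\chi_{G+m}$. For ergodicity, let $g\in L^2(X,\mu)$ be $T$-invariant; subtracting a constant we may assume $\int_X g\,d\mu=0$, so $g(\theta,\w)=\sum_{G\neq\emptyset}c_G(\theta)\chi_G(\w)$ with $c_G\in L^2(\T)$ and $\sum_G\|c_G\|_{L^2(\T)}^2=\|g\|^2<\infty$. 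Since $\chi_G\circ\sigma^{\varphi(\theta)}=\chi_{G+\varphi(\theta)}$, the relation $g\circ T=g$ is equivalent to
\[
  c_G(\theta)=c_{G-\varphi(\theta)}(\theta+\a)\qquad\text{for a.e. }\theta\text{ and every finite }G\subseteq\Z .
\]
Partition the nonempty finite subsets of $\Z$ into the orbits of $G\mapsto G+1$; each orbit is of the form $\mathcal{C}=\{G_0+k:k\in\Z\}$ with pairwise distinct members. For fixed $\mathcal{C}$ the displayed identities make $\theta\mapsto\sum_{G\in\mathcal{C}}|c_G(\theta)|^2$ invariant under $R_\a$ (translating $\mathcal{C}$ by $\pm1$ leaves it fixed), hence a.e. equal to a constant $\beta_\mathcal{C}\geq 0$, with $\sum_\mathcal{C}\beta_\mathcal{C}=\|g\|^2$. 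If $\beta_{\mathcal{C}_0}>0$ for some $\mathcal{C}_0=\{G_0+k\}$, put $a_k(\theta):=c_{G_0+k}(\theta)$; then $a_k(\theta)=a_{k-\varphi(\theta)}(\theta+\a)$, so $\Phi(\theta,k):=a_k(\theta)$ is invariant under the skew product $(\theta,k)\mapsto(\theta+\a,k-\varphi(\theta))$ of $\T\times\Z$, which is conjugate to the discrepancy skew product $T_\a$ (via $(\theta,k)\mapsto(\theta,-k)$) and hence ergodic by \cite{CK76}. But $\|\Phi\|^2_{L^2(\T\times\Z,\,m_\T\times\#)}=\int_\T\sum_k|a_k(\theta)|^2\,dm_\T(\theta)=\beta_{\mathcal{C}_0}<\infty$, and an $L^2$ function invariant under an ergodic transformation of an infinite measure space vanishes (its modulus is a.e. constant, and that constant must be $0$ by integrability). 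This contradiction forces every $c_G$ to be $0$, so $g=0$ and $T$ is ergodic.

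For zero entropy I would produce an explicit finite generator. Let $\calp_\T=\{[0,1/2),[1/2,1)\}$, which is a generator for $(\T,\B(\T),m_\T,R_\a)$ (for irrational $\a$ any partition of $\T$ into two arcs generates $\B(\T)$), let $\calq=\{\T\times[1]_0,\T\times[-1]_0\}$, and set $\G:=(\calp_\T\times\Sigma)\vee\calq$, a four-set partition of $X$ (here $\calp_\T\times\Sigma$ has atoms $P\times\Sigma$). Since $T^k(\theta,\w)=(\theta+k\a,\sigma^{\varphi_k(\theta)}\w)$, the $T^{-k}\calq$-atom of $(\theta,\w)$ records the value $\w(\varphi_k(\theta))$; because $T_\a$ is ergodic and conservative (so that, for a.e.\ $\theta$, the walk $k\mapsto\varphi_k(\theta)$ visits every integer) and $\bigvee_{k\in\Z}T^{-k}(\calp_\T\times\Sigma)$ recovers the Borel $\sigma$-algebra of the $\T$-coordinate, one checks that $\G$ is a generator, so $h_\mu(X,T)=h_\mu(X,T,\G)$. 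Writing $\A_n=\bigvee_{k=0}^{n-1}T^{-k}(\calp_\T\times\Sigma)$ and $\mathcal{D}_n=\bigvee_{k=0}^{n-1}T^{-k}\calq$, we have
\[
  H_\mu\Big(\bigvee_{k=0}^{n-1}T^{-k}\G\Big)=H_\mu(\A_n)+H_\mu(\mathcal{D}_n\mid\A_n),
\]
where $H_\mu(\A_n)=H_{m_\T}\big(\bigvee_{k=0}^{n-1}R_\a^{-k}\calp_\T\big)=o(n)$ since $h_{m_\T}(\T,R_\a)=0$. For the conditional term, the $\A_n$-atom of $(\theta,\w)$ determines the tuple $(\varphi_0(\theta),\dots,\varphi_{n-1}(\theta))$, whose set of values is (the increments being $\pm1$) an interval of integers of cardinality $L_n(\theta):=\max_{0\le k<n}\varphi_k(\theta)-\min_{0\le k<n}\varphi_k(\theta)+1$; given this, $\mathcal{D}_n$ is the partition of $\Sigma$ by the restriction of $\w$ to that interval, of conditional entropy $L_n(\theta)\log 2$. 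Hence $H_\mu(\mathcal{D}_n\mid\A_n)=\log 2\cdot\int_\T L_n(\theta)\,dm_\T(\theta)$. Since $\a$ is badly approximable, the Denjoy--Koksma inequality gives $\sup_\theta|\varphi_k(\theta)|=O(\log k)$, so $L_n(\theta)=O(\log n)$ uniformly in $\theta$; therefore $\frac1n H_\mu(\mathcal{D}_n\mid\A_n)\to 0$ and $h_\mu(X,T)=0$.

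The main obstacle is the ergodicity part. Its delicate points are the correct translation of $T$-invariance into the cocycle identities for the Fourier coefficients $c_G$, the bookkeeping that groups these coefficients along the shift-orbits so that the surviving mass reassembles into a single, well-defined $L^2$ function on $\T\times\Z$, and the recognition that the relevant skew product of $\T\times\Z$ is conjugate to the discrepancy skew product $T_\a$, so that its ergodicity (recalled above from \cite{CK76}) applies. Once that is in place, the zero-entropy assertion is a routine Kolmogorov--Sinai computation, whose only external input is the classical Denjoy--Koksma bound on the growth of the discrepancy cocycle for badly approximable $\a$.
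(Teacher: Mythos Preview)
Your proof is correct, but both halves follow genuinely different routes from the paper's.

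For ergodicity, the paper works directly with rectangles $A_1\times B_1$, $A_2\times B_2$ (the $B_j$ cylinder sets in $\Sigma$) and shows
\[
\lim_{N\to\infty}\frac1N\sum_{n=0}^{N-1}\mu\big((A_1\times B_1)\cap T^{-n}(A_2\times B_2)\big)\ge \mu(A_1\times B_1)\,\mu(A_2\times B_2),
\]
using that for $|\varphi_n(\theta)|$ large the shifted cylinders become $\nu$-independent, together with $\Psi_N^{(v)}(\theta)/N\to 0$. Your argument instead expands a $T$-invariant $g\in L^2$ in the characters of $(\Z/2\Z)^\Z$, groups the Fourier coefficients along shift-orbits of finite subsets of $\Z$, and recognises the resulting invariance as invariance of an $L^2$ function on $\T\times\Z$ under (a conjugate of) $T_\a$; ergodicity of $T_\a$ on an infinite measure space then forces everything to vanish. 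This is cleaner and more structural, and isolates exactly where the ergodicity of $T_\a$ enters. One small point to make explicit: after subtracting $\int g$, you should note that $c_\emptyset(\theta)=\int_\Sigma g(\theta,\w)\,d\nu(\w)$ is itself $R_\a$-invariant (by the $\sigma$-invariance of $\nu$), hence equal to the constant $\int g=0$; this is why the $G=\emptyset$ term drops out.

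For zero entropy, the paper invokes the Abramov--Rokhlin formula $h_\mu(T)=h_{m_\T}(R_\a)+h_\mu(T\mid R_\a)$ and bounds the fibre term by $\log|\beta|\int_\T a_N(\theta)/N\,dm_\T$, where $a_N(\theta)=|\{\varphi_n(\theta):0\le n<N\}|$; the key input is only Birkhoff's theorem, giving $a_N/N\to0$ a.e.\ for \emph{any} irrational $\a$. Your route via an explicit finite generator and the decomposition $H_\mu(\A_n)+H_\mu(\mathcal D_n\mid\A_n)$ is equally valid but buys its $o(n)$ bound from Denjoy--Koksma, i.e.\ from the badly approximable hypothesis; so the paper's argument is strictly more general at this step, while yours gives a sharper quantitative rate $O(\log n)$ under the hypothesis actually in force.
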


\begin{proof}
First we show that $(X,\X,\mu,T)$ is ergodic. Let $A_1,A_2\in \B(\T)$ with $m_\T(A_1)m_\T(A_2)>0$ and $B_j={_{-M_j}}[s^j]_{M_j}\in \B(\Sigma)$, where $s^j\in \{0,1\}^{2M_j+1}$
with some $M_j\in \N$, $j=1,2$.
By \eqref{b2},
\begin{equation*}
  \begin{split}
     & \mu\big((A_1\times B_1)\cap T^{-n}(A_2\times B_2)\big) \\
     = & \int_\Sigma \int_\T 1_{R_\a^{-n}A_2\cap A_1}(\theta)\cdot 1_{\sigma^{-\varphi_n(\theta)}B_2\cap B_1(\w)} dm_\T(\theta)d\nu(\w)\\
= & \int_{\T} 1_{R_\a^{-n}A_2\cap A_1}(\theta)\cdot \nu({\sigma^{-\varphi_n(\theta)}B_2\cap B_1}) dm_\T(\theta).
  \end{split}
\end{equation*}
Note that when $|q|>M_1+M_2$, $\nu(\sigma^{-q}B_1\cap B_2)=\nu(B_1)\nu(B_2)$. Thus
\begin{equation}\label{}
  \begin{split}
     & \mu\big((A_1\times B_1)\cap T^{-n}(A_2\times B_2)\big) \\
\ge & \int_{\T} 1_{R_\a^{-n}A_2\cap A_1}(\theta)\cdot \nu({\sigma^{-\varphi_n(\theta)}B_2\cap B_1}) 1_{\{k\in \N:|\varphi_k(\theta)|>M_1+M_2\}}(n) dm_\T(\theta)\\
=& \int_{\T} 1_{R_\a^{-n}A_2\cap A_1}(\theta)\cdot \nu(B_1)\nu(B_2)1_{\{k\in \N:|\varphi_k(\theta)|>M_1+M_2\}}(n) dm_\T(\theta)
\\
=& \int_{\T} 1_{R_\a^{-n}A_2\cap A_1}(\theta)\cdot \nu(B_1)\nu(B_2)\big(1-1_{\{k\in \N:|\varphi_k(\theta)|\le M_1+M_2\}}(n)\big) dm_\T(\theta)
\\
\ge & m_\T(R_\a^{-n} A_2\cap A_1)\nu(B_1)\nu (B_2)-\int_{m_\T} 1_{\{k\in \N:|\varphi_k(\theta)|\le M_1+M_2\}}(n) dm_\T(\theta) .
  \end{split}
\end{equation}
Note that
\begin{equation*}
  \begin{split}
     & \frac{1}{N}\sum_{n=0}^{N-1} 1_{\{k\in \N:|\varphi_k(\theta)|\le M_1+M_2\}}(n) = \frac{1}{N}\sum_{n=0}^{N-1}\sum_{v=-(M_1+M_2)}^{M_1+M_2} 1_{\{k\in \N: \varphi_k(\theta)=v\}}(n)\\
      = & \sum_{v=-(M_1+M_2)}^{M_1+M_2} \frac{1}{N}\sum_{n=0}^{N-1} 1_{\{k \in \N: \varphi_k (\theta)=v\}}(n)=\sum_{v=-(M_1+M_2)}^{M_1+M_2} \frac{\Psi_N^{(v)}(\theta)}{N}
   \end{split}
\end{equation*}
Thus
\begin{equation*}
\begin{split}
  & \frac{1}{N}\sum_{n=0}^{N-1} \mu\big((A_1\times B_1)\cap T^{-n}(A_2\times B_2)\big)\\ \ge &  \frac{1}{N}\sum_{n=0}^{N-1} m_\T(R_\a^{-n} A_2\cap A_1)\nu(B_1)\nu (B_2)- \sum_{v=-(M_1+M_2)}^{M_1+M_2} \int_{\T}\frac{\Psi_N^{(v)}(\theta)}{N}dm_{\T}(\theta).
\end{split}
 \end{equation*}
By \eqref{b1} and \eqref{b3}, we have for each $v\in \Z$,
$\displaystyle \lim_{N\to\infty}\frac{\Psi_N^{(v)}(\theta)}{N}=0$ for $m_\T$-a.e. $\theta\in \T$. By the Dominated Convergence Theorem, we have for each $v\in \Z$
$$\lim_{N\to\infty}\int_{\T}\frac{\Psi_N^{(v)}(\theta)}{N}dm_\T(\theta)=0.$$
By the ergodicity of $(\T,\B(\T), m_\T,R_\a)$, we deduce
\begin{equation*}
  \begin{split}
     & \lim_{N\to\infty}\frac{1}{N}\sum_{n=0}^{N-1}\mu\big((A_1\times B_1)\cap T^{-n}(A_2\times B_2)\big) \\ \ge & \lim_{N\to\infty} \frac{1}{N}\sum_{n=0}^{N-1} m_\T(R_\a^{-n} A_2\cap A_1)\nu(B_1)\nu (B_2)- \sum_{v=-(M_1+M_2)}^{M_1+M_2} \lim_{N\to\infty} \int_{\T}\frac{\Psi_N^{(v)}(\theta)}{N}dm_{\T}(\theta) \\
= & m_\T(A_1)m_\T(A_2) \nu (B_1)\nu (B_2)=\mu(A_1\times B_1)\mu(A_2\times B_2).
  \end{split}
\end{equation*}
Then it is standard to prove that for all $D_1,D_2\in \X$, we have that
$$\lim_{N\to\infty}\frac{1}{N}\sum_{n=0}^{N-1}\mu\big(D_1\cap T^{-n}D_2\big)\ge \mu(D_1)\mu(D_2). $$
In particular, we have that for any $D_1,D_2\in \X$ with $\mu(D_1)\mu(D_2)>0$, there is some $n\in \N$ such that $\mu\big(D_1\cap T^{-n}D_2\big)>0$, which means that $(X,\X,\mu,T)$ is ergodic.

\medskip

Now we use Abramov-Rokhlin formula to show that $h_\mu(T)=0$.
The proof of this part is almost the same to the proof of Proposition 2.2 in \cite{HSY2023}.

For any finite measurable partition $\b$ of $\Sigma$, we have
\begin{equation*}
  h_\mu(T|R_\a, \b)=\lim_{n\to\infty}\frac{1}{N} \int_\T H_\nu(\bigvee_{n=0}^{N-1}\sigma^{-\varphi_n(\theta)}\b) d m_\T(\theta),
\end{equation*}
where $\varphi_0(\theta)\equiv 0$.

For $\theta\in \mathbb{T}$ and $N\in\N$, we denote $a_N(\theta)$ the cardinality of the set $\{\varphi_n(\theta): 0\le n\le N-1\}$, i.e.
$$a_N(\theta)=|\{\varphi_n(\theta): 0\le n\le N-1\}|.$$
%where $|A | $ means the cardinality of the set $A$.

Then $a_N$ is a measurable function from $\mathbb{T}$ to $\{1,2,\cdots,N\}$, and the cardinality of $\bigvee_{n=0}^{N-1}\sigma^{-\varphi_n(\theta)}\b$ is not greater than $|\b|^{a_N(\theta)}$ for any $\theta\in \mathbb{T}$, and hence
$$\frac{1}{N} \int_\T H_\nu(\bigvee_{n=0}^{N-1}\sigma^{-\varphi_n(\theta)}\b) d m_\T(\theta)\le \frac{1}{N} \int_\T \log |\b|^{a_N(\theta)}d m_\T(\theta)= \int_\T \frac{a_N(\theta)}{N}\log |\b| d m_\T(\theta).$$
We claim that
for $m_\T$-a.e. $\theta\in \T$,
\begin{equation}\label{c3}
  \lim_{N\to\infty} \frac{a_N(\theta)}{N}=0.
\end{equation}
We now show the claim. Since $(\T,\B(\T),m_\T,R_\a)$ is ergodic and $\int_\T \varphi dm=0$, by Birkhoff ergodic theorem, for $m_\T$-a.e. $\theta\in \T$,
$$\frac{\varphi_n(\theta)}{n}=\frac{1}{n}\sum_{k=0}^{n-1}\varphi(R^k_\a \theta)\to \int_\T \varphi d m_\T=0, \ n\to\infty.$$
Thus for $m_\T$-a.e. $\theta\in \T$, for any $\ep>0$ there is some $M(\theta,\ep)\in \N$ such that when $ n\ge M(\theta,\ep)$, we have
$|\frac{\varphi_n(\theta)}{n}|\le \ep$. Thus for $N>M(\theta,\ep)$, we have $|\varphi_n(\theta)|\le \ep n\le \ep N$ for all $M(\theta,\ep)\le n\le N$. It follows that
$a_N(\theta)\le M(\theta,\ep)+2\ep N+1,$
and
$$ \lim_{N\to\infty} \frac{a_N(\theta)}{N}\le  \lim_{N\to\infty} \frac{M(\theta,\ep)+2\ep N+1}{N}\le 2\ep.$$
Since $\ep$ is arbitrary, we have \eqref{c3}, i.e. for $m_\T$-a.e. $\theta\in \T$,
$\lim \limits_{N\to\infty} \frac{a_N(\theta)}{N}=0.$ This ends the proof of the claim.

Thus by the Dominated Convergence Theorem,
\begin{align*}
h_\mu(T|R_\a, \b)&=\lim_{N\to\infty}\frac{1}{N} \int_\T H_\nu(\bigvee_{n=0}^{N-1}\sigma^{-\varphi_n(\theta)}\b) d m_\T(\theta)\\
&\le \log |\b| \lim_{N\to\infty}\int_\T \frac{a_N(\theta)}{N}d m_\T(\theta)=\log |\b|\int_\T \lim_{N\to\infty}\frac{a_N(\theta)}{N}d m_\T(\theta)\\
&=0.
\end{align*}
As $\b$ is an arbitrary finite measurable partition of $\Sigma$, we have $h_\mu(T|R_\a)=0$. Then by Abramov-Rokhlin formula,
$$h_\mu(T)=h_{m_\T}(R_\a)+h_\mu(T|R_\a)=0.$$
The proof is complete.
\end{proof}

\subsection{Construction of $(X,\X,\mu,S)$}

\subsubsection{}
We will choose a measurable subset $B\subseteq \T$ with $m_\T(B)>0$ and $E\subseteq \Z$ later. And given such $B\subseteq \T$ and $E\subseteq \Z$, we define a map ${\pi_E}: \Sigma\rightarrow \Sigma$ by
\begin{equation*}\label{}
  ({\pi_E}\w)(s)=\left\{
                     \begin{array}{ll}
                       \w(s), & \hbox{$s\in E$;} \\
                       -\w(s), & \hbox{$s\not\in E$.}
                     \end{array}
                   \right.
\end{equation*}
And define $\pi:\T\times \Sigma \rightarrow \T\times \Sigma$ as follows:
\begin{equation*}
  R(\theta,\w)=(\theta, \pi_E\w).
\end{equation*}
We define a transformation $S: X\rightarrow X$ by
$S:= \pi^{-1}\circ T\circ \pi.$
\begin{equation*}
\xymatrix
{
\T\times \Sigma \ar[d]_{\pi}  \ar[r]^{S}  &  \T\times \Sigma\ar[d]^{\pi} \\
\T\times \Sigma \ar[r]^{T} &  \T\times \Sigma}
\end{equation*}

\subsubsection{Choosing $B$ and $E$}\
\medskip

Let $\a\not\in \mathbb Q$ be badly approximable. By Lemma \ref{lem-key}, for each $v\in \Z$ and any $\ep_v>0$, there is some $B_v\in \B(\T)$ with $m_\T(\T\setminus B_v)<\ep_v$ and $M_v>1$ such that
\begin{equation*}\label{}
  \Psi^{(v)}_n(\theta)\le M_v\frac{n}{\sqrt{\log{n}}}, \ \text{for all}\ n\in \N, \theta\in B_v.
\end{equation*}
Choose $\ep_v$ such that $\sum_{v\in \Z}\ep_v<1$, and let $B=\bigcap_{v\in \Z} B_v$. Then
$$m_\T(B)=m_\T(\bigcap_{v\in \Z}B_v)\ge 1- \sum_{v\in \Z}\ep_v>0.$$
And for all $\theta\in B$, we have
\begin{equation}\label{a1}
  \Psi^{(v)}_n(\theta)\le M_v\frac{n}{\sqrt{\log{n}}}, \ \text{for all}\ n\in \N, v\in \Z.
\end{equation}
For all $v\in \N$, set
\begin{equation}\label{a2}
  C_v=\max\{M_u: -v\le u\le v\}.
\end{equation}
Then $1<C_1<C_2<\cdots$.

Now we define $\displaystyle E=\bigcup_{m=1}^\infty\pm [l_m,l_m+r_m]$ such that
\begin{enumerate}
  \item[(a)] $l_1>1$;
  \item[(b)] for all $m\in \N$, $r_m>l_m$ such that $\displaystyle \frac{C_{l_m}l_m}{\sqrt{\log{(l_m+r_m)}}}<\frac{1}{m}$;
  \item[(c)] for all $m\in \N$, $l_{m+1}>r_m+l_m$ such that $\displaystyle C_{l_m+r_m}\frac{l_m+r_m+1}{\sqrt{\log{l_{m+1}}}}<\frac{1}{m}$.
\end{enumerate}

\subsection{Proof of the Main Theorem}

Let
$$A_1=B\times \Sigma, \quad A_2=A_3=\T\times [1]_0.$$
We will show that the limit
\begin{equation}\label{}
  \frac{1}{N}\sum_{n=0}^{N-1}\mu(A_1\cap T^{-n}A_2\cap S^{-n}A_3)
\end{equation}
does not exist. Then we have our main theorem.

\medskip

Note that for all $n\in \N$, we have
\begin{equation*}
   T^n(\theta,\w)=(\theta+n\a, \sigma^{\varphi_n(\theta)}\w),
\end{equation*}
and
\begin{equation*}
   S^n(\theta,\w)=\pi^{-1}\circ T^n\circ \pi(\theta,\w)=(\theta+n\a, \pi_E^{-1}\circ \sigma^{\varphi_n(\theta)}\circ \pi_E(\w)),
\end{equation*}
It follows that $(\theta, \w)\in A_1\cap T^{-n}A_2\cap S^{-n}A_3$ if and only if
$$\theta\in B, \ \w(\varphi_n(\theta))=1, \ \text{and }\ (\pi_E\w)(\varphi_n(\theta))=1.$$
By the definition of $\pi_E$, we have
$$(\pi_E\w)(\varphi_n(\theta))=1 \Longleftrightarrow
\left\{
  \begin{array}{ll}
    \w(\varphi_n(\theta))=1, & \hbox{if $\varphi_n(\theta)\in E$;} \\
   \w(\varphi_n(\theta))=-1, & \hbox{if $\varphi_n(\theta)\not\in E$.}
  \end{array}
\right.
$$
Thus
$$A_1\cap T^{-n}A_2\cap S^{-n}A_3=\bigcup\{\{\theta\}\times [1]_{\varphi_n(\theta)}\in X: \theta\in B, \varphi_n(\theta)\in E \}.$$
And by Fubini's Theorem
$$\mu(A_1\cap T^{-n}A_2\cap S^{-n}A_3)=\int_{\theta\in B\atop{\varphi_n(\theta)\in E}}\nu([1]_{\varphi_n(\theta)})dm_\T=\frac12m_\T(\{\theta\in \T: \varphi_n(\theta)\in E\}).$$
Hence
\begin{equation}\label{a3}
  \begin{split}
    &  \frac{1}{N}\sum_{n=0}^{N-1}\mu(A_1\cap T^{-n}A_2\cap S^{-n}A_3) \\
     = & \frac 12 \int_B\Big(\frac{1}{N}\sum_{n=0}^{N-1} 1_E(\varphi_n(\theta))\Big)dm_\T\\
    =& \frac 12 \int_B\frac{\#\{0\le n\le N-1: \varphi_n(\theta)\in E\}}{N}dm_\T.
  \end{split}
\end{equation}

\subsubsection{The case when $N=l_{m+1}$}
Let $m\in \N$. Note that for $0\le n\le l_{m+1}-1$, $|\varphi_n(\theta)|\le l_{m+1}-1$, and $[l_m+r_m+1, l_{m+1}-1]\cap E=\emptyset$. Thus we have
\begin{equation*}
  \begin{split}
      & \#\{0\le n< l_{m+1}: \varphi_n(\theta)\in E\} \\
      \le & \#\{0\le n< l_{m+1}: |\varphi_n(\theta)|\le l_m+r_m\} \\
\le & \sum_{k=-(l_m+r_m)}^{l_m+r_m}\Psi_{l_{m+1}}^{(k)}(\theta)\le (2l_m+2r_m+1)C_{l_m+r_m}\frac{l_{m+1}}{\sqrt{\log{l_{m+1}}}}.
   \end{split}
\end{equation*}
The last inequality is from \eqref{a1} and \eqref{a2}. By \eqref{a3}
\begin{equation}\label{a4}
  \begin{split}
    &  \frac{1}{l_{m+1}}\sum_{n=0}^{l_{m+1}-1}\mu(A_1\cap T^{-n}A_2\cap S^{-n}A_3) \\
    =& \frac 12 \int_B\frac{\#\{0\le n\le l_{m+1}-1: \varphi_n(\theta)\in E\}}{l_{m+1}}dm_\T\\
\le & \frac 12 \int_B \frac{(2l_m+2r_m+1)C_{l_m+r_m}}{l_{m+1}}\frac{l_{m+1}}{\sqrt{\log{l_{m+1}}}}dm_\T\\
\le & C_{l_m+r_m}\frac{l_m+r_m+1}{\sqrt{\log{l_{m+1}}}}\le\frac 1m. \quad \quad (\text{by the condition}\ (c))
  \end{split}
\end{equation}

\subsubsection{The case when $N=l_{m}+r_m+1$}
Let $m\in \N$. Since $[l_m,l_m+r_m]\subseteq E$, for $0\le n\le l_m+r_m$, $\varphi_n(\theta)\not\in E$ implies that $\varphi_n(\theta)\le l_m-1$.
Hence we have
\begin{equation*}
  \begin{split}
      & \#\{0\le n\le l_{m}+r_m: \varphi_n(\theta)\in E\} \\
      \ge &l_m+r_m+1- \#\{0\le n\le  l_{m}+r_m: \varphi_n(\theta)\not\in E\} \\
\ge &l_m+r_m+1- \#\{0\le n\le  l_{m}+r_m: |\varphi_n(\theta)|\le l_{m}-1\}\\
\ge &l_m+r_m+1- \sum_{k=-(l_m-1)}^{l_m-1}\Psi_{l_m+r_m}^{(k)}(\theta)\\
\ge & l_m+r_m+1-C_{l_m-1}(2l_m-1)\frac{l_m+r_m}{\sqrt{\log{(l_m+r_m)}}}.
   \end{split}
\end{equation*}
The last inequality is from \eqref{a1} and \eqref{a2}. By \eqref{a3}
\begin{equation}\label{a5}
  \begin{split}
    &  \frac{1}{l_{m}+r_m+1}\sum_{n=0}^{l_{m}+r_m}\mu(A_1\cap T^{-n}A_2\cap S^{-n}A_3) \\
    =& \frac 12 \int_B\frac{\#\{0\le n\le l_{m}+r_m: \varphi_n(\theta)\in E\}}{l_{m}+r_m+1}dm_\T\\
\ge & \frac 12 \int_B\Big( 1-\frac{C_{l_m-1}(2l_m-1)}{l_m+r_m+1}\frac{l_m+r_m}{\sqrt{\log{(l_m+r_m)}}}\Big)dm_\T\\
\ge & \frac{1}{2}\Big(1-2\frac{C_{l_m}l_m}{\sqrt{\log{(l_m+r_m)}}}\Big) \quad \quad \quad (\text{since}\ C_{l_m-1}\le C_{l_m})\\
\ge & \frac 12- \frac{C_{l_m}l_m}{\sqrt{\log{(l_m+r_m)}}}\ge \frac 12-\frac 1m.\quad \quad (\text{by the condition} \ (b))
  \end{split}
\end{equation}
By \eqref{a4} and \eqref{a5}, we have that the limit
\begin{equation*}\label{}
  \frac{1}{N}\sum_{n=0}^{N-1}\mu(A_1\cap T^{-n}A_2\cap S^{-n}A_3)
\end{equation*}
does not exist. The proof of Main Theorem is complete.

%%%%%%%%%%%%%%%%%%%%%%%%%%%%%%%%%%%%%%%%%%%%%%%%%%%%%%%%

\end{document}